\theoremstyle{plain}
\newtheorem{theorem}{Theorem}[section]
\theoremstyle{plain}
\theoremstyle{plain}
\newtheorem{lemma}[theorem]{Lemma}
\theoremstyle{plain}
\theoremstyle{plain}
\theoremstyle{plain}
\newtheorem{definition}[theorem]{Definition}
\theoremstyle{plain}
\theoremstyle{remark}
\theoremstyle{remark}
\newtheorem{example}[theorem]{Example}
\theoremstyle{remark}
\title
[Property A for coarse spaces]
{Property A for coarse spaces}
\author{Hiroki Sako}
\thanks{The author is a Research Fellow of the Japan Society for the Promotion of Science (PD)}
\address
{Research Institute for Mathematical Sciences, Kyoto University, Kyoto 606-8502, Japan}
\email
{hiroki.sako@gmail.com}
\subjclass[2010]{20F65, 46L05, 51F99}
\begin{document}

\begin{abstract}
Property A introduced by Guoliang Yu
is an amenability-type property for metric spaces.
In this article, we study property A for uniformly locally finite coarse spaces.
Main examples of coarse spaces are
a metric space,
a set equipped with a discrete group action,
and a sequence of finitely generated groups.
The purpose of this article is to give complete proofs to
related basic facts.
\end{abstract}

\keywords{Property A; Coarse geometry}

\maketitle

\section{Introduction}
The subject of this article is amenability for generalized 
metric spaces.
Property A for discrete metric spaces was introduced by G. Yu
in his study of coarse Baum--Connes conjecture \cite{Yu:CoarseHilbert}.
Property A is widely recognized as an amenability-type condition.
For discrete groups,
amenability depends only on its large scale structure.
Property A is also independent of local features of
metric spaces, so it is natural to
consider a new concept of spaces. 
J. Roe introduced a notion called coarse space in \cite{RoeLectureNote}.
The category of coarse spaces encompasses
\begin{itemize}
\item
a metric space;
\item
a discrete group;
\item
a set equipped with a discrete group action
(Example \ref{ExampleGroupCoarse});
\item
a sequence of finite generated groups with fixed generators
(Example \ref{ExampleGroupSeq}).
\end{itemize}
In this article, we clarify the definition of property A for uniformly locally finite coarse spaces,
which is not necessarily metrizable.
Uniform local finiteness of a coarse space $X$ means that $X$ has very poor local structure.
We prove that the property can be rephrased in the following context:
\begin{itemize}
\item
a characterization by Hilbert space;
\item
nuclearity of the uniform Roe algebra;
\item
the operator norm localization property (ONL).
\end{itemize}

This article is not a usual research paper.
Ideas in the proofs have already appeared in papers which dealt with property A for metric spaces
(\cite{PaperBrodzkiNibloWright}, \cite{ONLPoriginal}, \cite{HigsonRoe}, \cite{SkandalisTuYu}, \cite{RoeLectureNote},
\cite{SakoONLP}).
We give a precise proof for the equivalence between property A
and ONL, which has been proved only for metric spaces.
The general form (Theorem \ref{TheoremONLP}) of the theorem plays a key role in the paper \cite{SakoNote}, because the argument is not closed in the category of metric spaces.
The framework of coarse spaces is not only necessary but also appropriate.

\section{Definition of Property A}

\subsection{Coarse space}
A coarse space is a set $X$ equipped with a coarse structure $\mathcal{C}$. The coarse structure $\mathcal{C}$ is a family of
subsets of $X^2$ and satisfies several requirements. 
To describe them, we will use the following notations.
For subsets $T, T_1, T_2 \subset X^2$, the inverse $T^{-1}$ and the product $T_1 \circ T_2$
are defined as follows:
\begin{eqnarray*}
T^{-1} &=& \{(x, y) \in X^2 \ ;\ (y,x) \in T\},\\
T_1 \circ T_2 &=& \{ (x, y) \in X^2  
\ ;\ \exists z \in X, (x, z) \in T_1, (z, y) \in T_2 \}.
\end{eqnarray*}
Denote by $T^{\circ n}$ the $n$-th power 
$T \circ T \circ \cdots \circ T$.
In the case that the subsets are graphs of partially defined maps,
these notations coincide with the usual conventions for mappings.
For subsets $Y \subset X$ and $T \subset X^2$,
let $T[Y]$ be the subset of $X$ defined by
\begin{eqnarray*}
\{x \in X\ ; \ \exists y \in Y, (x, y) \in T \}.
\end{eqnarray*}
For a singleton $\{x\}$, we simply write $T[x]$ for $T[\{ x \}]$. 
A subset $Y \subset X$ is called a $T$-bounded set if there exists $x \in X$ such that $Y \subset T[x]$.
If $Y$ is a $T_2$-bounded set, then $T_1[Y]$ is a $(T_1 \circ T_2)$-bounded set.

\begin{definition}
[Definition 2.3 in \cite{RoeLectureNote}]
Let $X$ be a set.
A family $\mathcal{C}$ of subsets of $X^2$ is 
called a coarse structure on X if 
\begin{itemize}
\item
$\Delta_X = \{(x, x) ; x \in X\} \in \mathcal{C}$;
\item
If $T \in \mathcal{C}$, then $T^{-1} \in \mathcal{C}$;
\item
If $T_1, T_2 \in \mathcal{C}$, 
then $T_1 \circ T_2 \in \mathcal{C}$;
\item
If $T_1, T_2 \in \mathcal{C}$, 
then $T_1 \cup T_2 \in \mathcal{C}$;
\item
If $T_1 \in \mathcal{C}$ and $T_2 \subset T_1$, 
then $T_2 \in \mathcal{C}$.
\end{itemize}
The pair $(X, \mathcal{C})$ is called a coarse space. 
Elements of $\mathcal{C}$ are called controlled sets or entourages.
\end{definition}
Two elements $x, y \in X$ are said to be connected if $\{(x, y)\} \in \mathcal{C}$. 
If arbitrary two points are connected, the space $X$ is said to be connected.

\begin{example}\label{ExampleMetricSpace}
Let $(X, d)$ be a metric spaces.
A coarse structure $\mathcal{C}$ is defined 
on $X$ by
$\mathcal{C} = \left\{ T \subset X^2 \ ;\ \exists S>0,
d |_{T} \le S \right\}$.
The space $X$ is connected.
\end{example}

\begin{example}\label{ExampleGroupCoarse}
Let $X$ be a set on which a discrete group $G$ acts.
For a finite subset $K \subset G$, let $T_K$ be the orbit of $K$.  That is, $T_K = \{ (k x, x) \in X^2 ; k \in K, x \in X\}$.
The coarse structure $\mathcal{C}_G$ on $X$ is the collection $\{T; \exists \textrm{ finite } K \subset G, T \subset T_K\}$.
If $G$ is countable and the action on $X$ is transitive, then the coarse structure is realized by some metric.
The group $G$ naturally has a coarse structure defined by the left transformation action of $G$.
\end{example}

\begin{example}\label{ExampleGroupSeq}
Let $\left\{\left(G^{(m)}, g^{(m)}_1, g^{(m)}_2, \cdots, g^{(m)}_k\right)\right\}_{m = 1}^\infty$
be a sequence of groups with fixed $k$-generators.
Define surjective homomorphisms $\phi^{(m)} \colon F_k \rightarrow G^{(m)}$ from the free group 
$F_k = \langle g_1, \cdots, g_k \rangle$ by $\phi^{(m)}(g_i) = g_i^{(m)}$.
The set $\bigsqcup_{m = 1}^\infty G^{(m)}$ is equipped with an $F_k$-action defined by
$h \cdot g = \phi^{(m)}(h) g, g \in G^{(m)}$. 
This action gives a coarse structure on the disjoint union of groups $\bigsqcup_{m = 1}^\infty G^{(m)}$.
This coarse space is not connected.
\end{example}

\begin{definition}
A coarse space $(X, \mathcal{C})$ is said to be uniformly locally finite
if every controlled set $T \in \mathcal{C}$ satisfies the
inequality $\sup_{x \in X} \sharp(T[x]) < \infty$.
\end{definition}

The coarse spaces in Example \ref{ExampleGroupCoarse}
and Example \ref{ExampleGroupSeq} are uniformly locally finite.
In many references, a metric space whose coarse structure is uniformly locally finite is called
a metric space with bounded geometry.

\subsection{Definition of property A}
The definition of property A for $X$ is given by a F{\o}lner
condition of $X \times \mathbb{N}$.

\begin{definition}[Definition 2.1 of Yu \cite{Yu:CoarseHilbert}]
A discrete metric space $X$ 
is said to have property A
if for every $\epsilon > 0$ and $R > 0$, there exist
$S > 0$ and a family of finite subsets 
$\{A_x\}_{x \in X}$ of $X \times \mathbb{N}$ such that
\begin{itemize}
\item
$A_x \subset \{y \in X; d(x, y) < S\} \times \mathbb{N}$;
\item
$(x, 1) \in A_x$;
\item
The symmetric difference 
$A_x \bigtriangleup A_y = (A_x \setminus A_y) \cup (A_y \setminus A_x)$ satisfies the inequality $\sharp(A_x \bigtriangleup A_y) 
< \epsilon \sharp(A_x \cap A_y)$, 
when $d(x, y) \le R$.
\end{itemize}
\end{definition}

We define property A for coarse spaces as follows.
\begin{definition}\label{DefinitionPropertyA}
A uniformly locally finite coarse space $(X, \mathcal{C})$ 
is said to have property A
if for every positive number $\epsilon$ and every controlled set $T \in \mathcal{C}$, 
there exist
a controlled set $S \in \mathcal{C}$ 
and a subset 
$A \subset S \times \mathbb{N}$ such that
\begin{itemize}
\item
For $x \in X$, $A_x = \{(y, n) \in X \times \mathbb{N}\ 
; (x, y, n) \in A\}$ is finite;
\item
$\Delta_X \times \{1\} \subset A$,
where $\Delta_X$ is the diagonal subset of $X^2$;
\item
$\sharp(A_x \bigtriangleup A_y) 
< \epsilon \sharp(A_x \cap A_y)$, 
if $(x, y) \in T$.
\end{itemize}
\end{definition}
The second condition can be replaced with the following:
\begin{itemize}
\item
For $x \in X$, $A_x$ is not empty.
\end{itemize}
This is a conclusion of the proof of Theorem \ref{TheoremHulanicki-type}.

\section{Characterizations by Hilbert spaces and positive definite kernels}
Let $X$ be a uniformly locally finite coarse space.
For a controlled set $T \subset X^2$ of $X$,
we define a linear space $E_T$ of bounded linear operators on $\ell^2(X)$ as follows:
\[E_T
=
\{a \in \mathbb{B}(\ell^2 (X)); 
\langle a \delta_y, \delta_x \rangle = 0 \textrm{\ if\ }
(x, y) \in X^2 \setminus T\}.
\]
We note that
\begin{itemize}
\item
if $b_1 \in E_{T_1}$ and $b_2 \in E_{T_2}$,
then $b_1 b_2 \in E_{T_1 \circ T_2}$;
\item
and that if $b \in E_T$, then $b^* \in E_{T^{-1}}$.
\end{itemize}
We also define a C$^*$-algebra called the uniform Roe algebra $C^*_\mathrm{u} (X)$ of $X$, which plays a key role for characterizations of property A. The algebra $C^*_\mathrm{u} (X)$ is defined by
$\overline{\bigcup_{T: \textrm{controlled}} E_T}$.

\begin{theorem}\label{TheoremHulanicki-type}
For a uniformly locally finite coarse space $(X, \mathcal{C})$,
the following conditions are equivalent:
\begin{enumerate}
\item\label{ConditionPropertyA}
$X$ has property A;
\item\label{ConditionEll2}
For every $\epsilon > 0$,
and every $T \in \mathcal{C}$,
there exists a map $\eta \colon X \rightarrow \ell^2(X)$
assigning unit vectors such that
\begin{itemize}
\item
if $(x, y) \in T$, then $\|\eta_x - \eta_y\| < \epsilon$,
\item
$\{(x, y) \in X^2 ; y \in \mathrm{supp}(\eta_x)\}$ is controlled;
\end{itemize}
\item\label{ConditionHilbert}
For every $\epsilon > 0$,
and every $T \in \mathcal{C}$,
there exist a Hilbert space $\mathcal{H}$
and a map $\eta \colon X \rightarrow \mathcal{H}$
assigning unit vectors such that
\begin{itemize}
\item
if $(x, y) \in T$, then $\|\eta_x - \eta_y\| < \epsilon$,
\item
$\{(x, y) \in X^2 ; \langle \eta_y, \eta_x \rangle \neq 0\}$ is controlled;
\end{itemize}
\item\label{ConditionPositiveKernel}
For every $\epsilon > 0$ and every $T \in \mathcal{C}$, there exists and a positive definite kernel $k \colon X^2 \rightarrow \mathbb{C}$ such that
\begin{itemize}
\item
if $(x, y) \in T$, then $\|1 - k(x, y)\| < \epsilon$,
\item
$\{(x, y) \in X^2 ; k(x, y) = 0\}$ is controlled.
\end{itemize}
\end{enumerate}
\end{theorem}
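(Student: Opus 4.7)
The plan is to establish the cycle $(\ref{ConditionPropertyA}) \Rightarrow (\ref{ConditionEll2}) \Rightarrow (\ref{ConditionHilbert}) \Rightarrow (\ref{ConditionPositiveKernel}) \Rightarrow (\ref{ConditionPropertyA})$. The three forward implications are short; the closing step is where the real work lies.

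For $(\ref{ConditionPropertyA}) \Rightarrow (\ref{ConditionEll2})$, I would apply Definition \ref{DefinitionPropertyA} with tolerance $\epsilon_0 \ll \epsilon$ to obtain sets $A_x \subset X \times \mathbb{N}$ with $A \subset S \times \mathbb{N}$, and set $\eta_x(y) = \sqrt{|\{n : (y,n) \in A_x\}|/|A_x|} \in \ell^2(X)$. Each $\eta_x$ is a unit vector supported in $S[x]$, so the controlled-support condition is clear. Writing $n_z, m_z$ for the column counts of $A_x, A_y$ above $z$, the elementary estimates $\sqrt{n_z m_z} \ge \min(n_z, m_z)$ and $|A_x|, |A_y| < (1+\epsilon_0)|A_x \cap A_y|$ give $\langle \eta_x, \eta_y\rangle > 1/(1+\epsilon_0)$, so $\|\eta_x - \eta_y\|^2 < 2\epsilon_0/(1+\epsilon_0) < \epsilon^2$. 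Step $(\ref{ConditionEll2}) \Rightarrow (\ref{ConditionHilbert})$ is immediate with $\mathcal{H} = \ell^2(X)$, since the overlap set $\{(x,y) : \langle \eta_y, \eta_x\rangle \neq 0\}$ is contained in $S \circ S^{-1}$, still controlled. For $(\ref{ConditionHilbert}) \Rightarrow (\ref{ConditionPositiveKernel})$, put $k(x,y) = \langle \eta_y, \eta_x\rangle$: positive definiteness is the Gram-matrix identity, $|1 - k(x,y)| \le \|\eta_x - \eta_y\|$ by Cauchy--Schwarz, and the controlled-support condition is given.

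The substantive step $(\ref{ConditionPositiveKernel}) \Rightarrow (\ref{ConditionPropertyA})$ I would route as $(\ref{ConditionPositiveKernel}) \Rightarrow (\ref{ConditionEll2}) \Rightarrow (\ref{ConditionPropertyA})$. For the first half, after rescaling $k(x,y) \mapsto k(x,y)/\sqrt{k(x,x)k(y,y)}$ so $k(x,x) = 1$, the matrix $a_k = [k(x,y)]_{x,y}$ is a positive element of some $E_T \subset C^*_\mathrm{u}(X)$, bounded by Schur's test together with uniform local finiteness. Let $\sqrt{a_k} \in C^*_\mathrm{u}(X)$ be its positive square root, approximate it in operator norm by some $b \in E_{T'}$, put $\eta_x = b\delta_x$, and normalize. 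The key identity $\|\sqrt{a_k}(\delta_x - \delta_y)\|^2 = 2 - 2\mathrm{Re}\,k(x,y)$ together with a small-norm bound on $\|b^*b - a_k\|$ controls both $\|\eta_x - \eta_y\|$ and $1 - \|\eta_x\|$. For the second half, given unit vectors $\eta_x \in \ell^2(X)$ satisfying $(\ref{ConditionEll2})$, put $p_x(y) = |\eta_x(y)|^2$: the Mazur-type inequality $\|p_x - p_y\|_1 \le 2\|\eta_x - \eta_y\|$ combined with the subgraph construction $A_x = \{(y,n) \in X \times \mathbb{N} : 1 \le n \le \lceil N p_x(y)\rceil\}$ for large $N$ yields $|A_x \triangle A_y| \le 2N\epsilon_0 + 2M$ and $|A_x \cap A_y| \ge N(1-\epsilon_0)$, where $M = \sup_x |S[x]|$; the ratio is $< \epsilon$ once $\epsilon_0$ is small and $N$ is large. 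The diagonal requirement $\Delta_X \times \{1\} \subset A$ is enforced by padding each $A_x$ with $(x,1)$, as justified by the remark after Definition \ref{DefinitionPropertyA}.

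The main obstacle will be the first half of the closing step: $\sqrt{a_k}$ generally has no finite propagation, so I must approximate it in operator norm by a finite-propagation operator $b$ carefully enough that the vectors $b\delta_x$ remain close to $\sqrt{a_k}\delta_x$ uniformly in $x$, and then the distortion caused by the renormalization must also be controlled uniformly. This is the place where the C$^*$-algebraic structure of $C^*_\mathrm{u}(X)$ is used in an essential way.
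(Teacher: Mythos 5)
Your proposal is correct and follows essentially the same route as the paper: the same cycle of implications, the same $\eta_x(y)=\sqrt{\sharp A_x(y)/\sharp A_x}$ construction for (\ref{ConditionPropertyA})$\Rightarrow$(\ref{ConditionEll2}), the Gram kernel for (\ref{ConditionHilbert})$\Rightarrow$(\ref{ConditionPositiveKernel}), approximation of the positive operator $a_k$ by $b^*b$ with $b$ of controlled support for (\ref{ConditionPositiveKernel})$\Rightarrow$(\ref{ConditionEll2}), and the layer-cake discretization for (\ref{ConditionEll2})$\Rightarrow$(\ref{ConditionPropertyA}). The ``main obstacle'' you flag is handled exactly as you suggest and is not really an obstacle: $\sqrt{a_k}$ lies in $C^*_\mathrm{u}(X)=\overline{\bigcup_T E_T}$, so a norm-$\delta$ approximant $b\in E_S$ exists and $\|(\sqrt{a_k}-b)\delta_x\|\le\delta$ uniformly in $x$, which is precisely the paper's argument.
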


For basic facts of positive definite kernel (or function of positive type), see Appendix C of the book \cite{BekkaDelaHarpeValette}.
This theorem for metric spaces is given in 
\cite[Theorem 3]{PaperBrodzkiNibloWright}.

\begin{proof}
Suppose that $X$ has property A.
Let $\epsilon$ be an arbitrary positive number and 
let $T$ be a controlled set of $X$.
Then there exist a controlled set $S$ and a subset $A \subset S \times \mathbb{N}$ which satisfy the conditions of Definition \ref{DefinitionPropertyA}.
Define a subset $A_x(y)$ of $\mathbb{N}$ by
\[A_x(y) = \{n \in \mathbb{N} ; (y, n) \in A_x\}
= \{n \in \mathbb{N} ; (x, y, n) \in A \}.
\]
Let $\|\cdot\|_1$ denote the norm of $\ell^1(X)$.
For $x \in X$, define vectors $\zeta_x, \xi_x$ of $\ell^1(X)$
by
\[\zeta_x(y) = \sharp(A_x(y)), \quad \xi_x 
= \zeta_x / \|\zeta_x\|_1.\]
For $(x, y) \in T$, we have
\begin{eqnarray*}
\|\xi_x - \xi_y\|_1 
&=& \frac{\left\|\|\zeta_y\|_1 \zeta_x - \|\zeta_x\|_1 \zeta_y\right\|_1}
{\| \zeta_x \|_1 \|\zeta_y \|_1}\\
&\le& 
\frac{|\|\zeta_y\|_1 - \|\zeta_x\|_1| \cdot \|\zeta_x\|_1 + 
\|\zeta_x\|_1 \cdot \|\zeta_x - \zeta_y\|_1}
{\| \zeta_x \|_1 \|\zeta_y \|_1}\\
&\le& 
2 \frac{\|\zeta_x - \zeta_y\|_1}
{\|\zeta_y \|_1}\\
&\le& 
2 \frac{\sharp(A_x \bigtriangleup A_y)}
{\sharp(A_y)} \\
&<& 2 \epsilon.
\end{eqnarray*}
Define a unit vector $\eta_x$ of $\ell^2(X)$ by
$\eta_x(y) = \sqrt{\xi_x(y)}.$ Then we have
\[\|\eta_x - \eta_y\|_2 \le \sqrt{\|\xi_x - \xi_y \|_1} 
< \sqrt{2 \epsilon}, \quad (x, y) \in T.\]
The set $\{(x, y) \in X^2 ; y \in \mathrm{supp}(\eta_x)\}$ is controlled, since it is included in $S$.
Here we obtain condition (\ref{ConditionEll2}).

When we denote by $S$ the subset
$\{(x, y) \in X^2 ; y \in \mathrm{supp}(\eta_x)\}$,
the subset
$\{(x, y) \in X^2 ; \langle \eta_y, \eta_x \rangle \neq 0\}$
is included in $S \circ S^{-1}$.
Therefore (\ref{ConditionEll2}) implies
(\ref{ConditionHilbert}).

Suppose that (\ref{ConditionHilbert}) holds.
For arbitrary $\epsilon>0$ and 
$T \in \mathcal{C}$,
there exists a map $\eta \colon X \rightarrow \mathcal{H}$
such that $\|\eta_x\| = 1$ and
\begin{itemize}
\item
if $(x, y) \in T$, then $\|\eta_x - \eta_y\| < \epsilon$,
\item
$\{(x, y) \in X^2 ; \langle \eta_x, \eta_y \rangle \neq 0\}$ is controlled.
\end{itemize}
Define a function $k \colon X^2 \rightarrow \mathbb{C}$
by $k(x, y) = \langle \eta_x, \eta_y \rangle$.
The function $k$ satisfies condition (\ref{ConditionPositiveKernel}).

For the converse direction, we first prove that condition (\ref{ConditionPositiveKernel}) implies condition (\ref{ConditionEll2}).
Suppose that condition (\ref{ConditionPositiveKernel}) holds. 
For arbitrary $\epsilon > 0$ and arbitrary controlled set $T$,
take a positive definite kernel 
$k : X^2 \rightarrow \mathbb{C}$ satisfying (\ref{ConditionPositiveKernel}).
Then there exists an operator $a$ in $C^*_\mathrm{u} (X)$ such that $\langle a \delta_y, \delta_x \rangle = k(x, y)$.
Note that the operator $a$ is positive.
So there exist $S \in \mathcal{C}$ and $b \in E_S$ which satisfies $\|a - b^* b\| < \epsilon$. 
For $x \in X$, we define a vector $\zeta_x \in \ell^2(X)$
by $b \delta_x$. Since $\{(x, y) \in X^2 ; y \in \mathrm{supp}(\zeta_x)\}$ is included in $S^{-1}$, it is controlled.
For $(x, y) \in T$, we have
\begin{eqnarray*}
\|\zeta_x - \zeta_y\|^2 
&=&
k(x, x) - 2 \mathrm{Re}(k(x, y)) + k(y, y)
< 4 \epsilon,\\
\left| 1 - \| \zeta_x \|^2 \right|
&=& |1 - k(x, x)| < \epsilon.
\end{eqnarray*}
When we define $\eta_x$ by $\zeta_x / \|\zeta_x\|$,
$\|\eta_x - \eta_y\|$ is uniformly close to $0$ for $(x, y) \in T$.
Thus we obtain condition (\ref{ConditionEll2}).

Finally, we prove that condition (\ref{ConditionEll2})
implies condition (\ref{ConditionPropertyA}).
For arbitrary $\epsilon > 0$ and $T \in \mathcal{C}$,
take unit vectors $\{\eta_x\}_{x \in X}$ satisfying
condition (\ref{ConditionEll2}).
We define a positive element $\xi_x$ of $\ell^1(X)$ by
$\xi_x(y) = |\eta_x (y)|^2$.
For $(x, y) \in T$, we have
$\| \xi_x - \xi_y \|_1 
\le \| \eta_x + \eta_y \|_2 \| \eta_x - \eta_y \|_2 
< 2 \epsilon.$
The cardinality of the support $\mathrm{supp}(\xi_x)$ 
is uniformly bounded.
More precisely, 
$\sup_{x \in X} \sharp(\mathrm{supp}(\xi_x)) < \infty$.
By replacing $\xi_x$, we obtain the following:
there exists a natural number $m$ such that
\begin{itemize}
\item
$|1 - \|\xi_x\|_1| < \epsilon$;
\item
If $(x, y) \in T$, then $\|\xi_x - \xi_y\|_1 < \epsilon$;
\item
$\{(x, y) \in X^2 ; y \in \mathrm{supp}(\xi_x)\}$ is controlled;
\item
For every $(x, y) \in X^2$,
$\xi_x(y) \in \{n / m ; n \in \mathbb{N} \cup \{0\} \}$;
\item
$\xi_x(x) \neq 0$.
\end{itemize}
Let $S$ denote the controlled set $\{(x, y) \in X^2 ; y \in \mathrm{supp}(\xi_x)\}$.
Define $A \subset S \times (\mathbb{N} \cup \{0\})$ by
\[A = \{(x, y, n) ; n < m \cdot \xi_x(y)\}.\]
Then the family of the finite sets $\{A_x = \{(y, n) ; (x, y, n) \in A \}\}_{x \in X}$ satisfy
\[\sharp(A_x \bigtriangleup A_y) =  
\|m \xi_x - m \xi_y\|_1
< \epsilon m < \frac{\epsilon}{1 - \epsilon} \sharp(A_x), 
\quad (x, y) \in T.\]
We also have
\begin{itemize}
\item
$\sharp(A_x \bigtriangleup A_y) 
< 2 \epsilon \sharp(A_x \cap A_y) /(1 - \epsilon), 
\ (x, y) \in T$;
\item
$(x, 0) \in A_x$.
\end{itemize}
It follows that $X$ has property A.
\end{proof}

\section{Characterization by translation C$^\ast$-algebras}
We generalize the easier half of Theorem 5.3 in \cite{SkandalisTuYu} in this section.
\begin{lemma}\label{LemmaNorm}
Let $X$ be a set and let $T$ be a subset of $X^2$.
Suppose that $\sharp(T[x]), \sharp(T^{-1} [x])$
are uniformly bounded.
Let $b$ be an operator whose matrix coefficients are zero on $X^2 \setminus T$.
Then we have the following estimate of the operator norm:
\[ \|b\| \le \max 
\left\{ \sup_{x \in X} \sharp(T[x]), \sup_{x \in X} \sharp(T^{-1} [x]) \right\} \cdot 
\sup_{x, y \in X} |\langle b \delta_y, \delta_x \rangle|.\]
\end{lemma}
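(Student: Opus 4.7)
The plan is to apply the Schur test (a double Cauchy--Schwarz argument) to the matrix $(b_{xy}) = (\langle b\delta_y,\delta_x\rangle)$, exploiting the uniform bounds on the number of nonzero entries in each row and column imposed by $T$-support.

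Write $N_1 = \sup_{x \in X} \sharp(T[x])$, $N_2 = \sup_{x \in X} \sharp(T^{-1}[x])$, and $M = \sup_{x,y} |b_{xy}|$. For each fixed $x$, the sum $\sum_y |b_{xy}|$ runs only over $y \in T[x]$ and so is at most $N_1 M$; symmetrically, for fixed $y$, $\sum_x |b_{xy}| \le N_2 M$. First I would verify that $b$ extends to a bounded operator at all, using these row and column $\ell^1$-bounds (this is routine given the hypothesis).

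Then, for an arbitrary $\xi \in \ell^2(X)$ with finite support, I would estimate
\begin{equation*}
\|b\xi\|^2 = \sum_x \Bigl|\sum_y b_{xy}\xi_y\Bigr|^2
\le \sum_x \Bigl(\sum_y |b_{xy}|\Bigr)\Bigl(\sum_y |b_{xy}|\,|\xi_y|^2\Bigr),
\end{equation*}
by Cauchy--Schwarz applied to the $y$-sum. The first factor is at most $N_1 M$ uniformly in $x$, so one can pull it out; then exchanging the order of summation in the remaining double sum and using the column bound $\sum_x |b_{xy}| \le N_2 M$ yields $\|b\xi\|^2 \le N_1 N_2 M^2 \|\xi\|^2$. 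Taking square roots gives $\|b\| \le \sqrt{N_1 N_2}\,M \le \max(N_1,N_2)\,M$, which is the asserted inequality.

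I do not expect a real obstacle here: everything reduces to a clean Schur test, and the hypotheses $\sharp(T[x]), \sharp(T^{-1}[x]) < \infty$ uniformly are exactly what is needed to convert the pointwise bound $M$ into the row and column $\ell^1$-bounds that the Schur test consumes. The only minor care is the initial well-definedness of $b$ as a bounded operator on $\ell^2(X)$, but the lemma hypothesizes $b$ as an operator, so one can simply work with $\xi$ of finite support and invoke density.
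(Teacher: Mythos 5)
Your proposal is correct and is essentially the paper's own argument: both are the Schur test via Cauchy--Schwarz, the paper estimating the bilinear form $|\langle b\eta,\xi\rangle|$ over the double sum while you estimate $\|b\xi\|^2$ row by row, and both actually yield the slightly sharper bound $\sqrt{N_1N_2}\,M$ before weakening to the maximum. (One cosmetic point: with the paper's convention $T[x]=\{z:(z,x)\in T\}$, the nonzero entries of the row indexed by $x$ are at $y\in T^{-1}[x]$, not $T[x]$, but since the final bound is symmetric in $N_1,N_2$ this does not affect anything.)
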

This lemma is one version of the Schur tests.
\begin{proof}
We denote by $b_{x, y}$ the matrix coefficient $\langle b \delta_y, \delta_x \rangle$.
Let $\xi = (\xi_x)_{x \in X}$ and $\eta = (\eta_x)_{x \in X}$
be elements of $\ell^2(X)$.
By the Cauchy--Schwarz inequality, we have
\begin{eqnarray*}
|\langle b \eta, \xi \rangle|
&=&
\left|\sum_{x, y} \overline{\xi_x} b_{x, y} \eta_y \right|\\
&\le&
\sum_{x, y} |\xi_x| |b_{x, y}|^{1/2} |b_{x, y}|^{1/2} |\eta_y|\\
&\le&
\left(\sum_{x, y} |\xi_x|^2 |b_{x, y}| \right)^{1/2} 
\left(\sum_{x, y} |b_{x, y}| |\eta_y|^2 \right)^{1/2}\\
&\le&
\sup_{x, y} |b_{x, y}|
\left(\sup_{x}\sharp(T^{-1}[x]) \sum_{x} |\xi_x|^2 \right)^{1/2} 
\left(\sup_{y}\sharp(T[y]) \sum_{y} |\eta_y|^2 \right)^{1/2}\\
&\le&
\max \left\{ \sup_{y \in X} \sharp(T[y]), \sup_{x \in X} \sharp(T^{-1} [x]) \right\} \cdot
\sup_{x, y} |b_{x, y}| \cdot
\|\xi\| \cdot \|\eta\|.
\end{eqnarray*}
Thus we obtain the inequality in the lemma.
\end{proof}

\begin{theorem}\label{TheoremNuclear}
Let $X$ be a uniformly locally finite coarse space.
If $X$ has property A, then the translation C$^\ast$-algebra $C^*_\mathrm{u}(X)$ is nuclear.
\end{theorem}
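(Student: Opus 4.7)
The plan is to establish the completely positive approximation property (CPAP) for $C^*_\mathrm{u}(X)$. First, by Theorem \ref{TheoremHulanicki-type}(\ref{ConditionEll2}), for each $\epsilon > 0$ and controlled $T$ one has a map $\eta \colon X \to \ell^2(X)$ of unit vectors with $\|\eta_x - \eta_y\| < \epsilon$ on $T$ and controlled support $S = \{(x,y):\eta_x(y) \neq 0\}$. Introducing the diagonal multipliers $D_w \in \ell^\infty(X) \subset C^*_\mathrm{u}(X)$ defined by $D_w \delta_x = \eta_x(w)\delta_x$, I form the Stinespring-type Schur multiplier
\[
M_k(a) = \sum_{w \in X} D_w\,a\,D_w^*, \qquad M_k(a)_{y,x} = \langle \eta_y, \eta_x \rangle\,a_{y,x},
\]
which is UCP since $\sum_w D_w D_w^* = I$. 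Because $|\langle \eta_y, \eta_x\rangle - 1| \le \|\eta_y - \eta_x\|$ on $T$, Lemma \ref{LemmaNorm} applied to $M_k(a) - a \in E_T$ yields $\|M_k(a) - a\| \le \epsilon\,C_T\,\|a\|$ for $a \in E_T$, where $C_T = \max(\sup_x \sharp T[x], \sup_x \sharp T^{-1}[x])$. Thus $M_k$ approximates the identity arbitrarily well on the dense subset $\bigcup_T E_T$.

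The main obstacle is to factor $M_k$ through a nuclear C*-algebra. By uniform local finiteness the controlled set $S^{-1} \circ S$ has uniformly bounded row size, so the graph $(X, (S^{-1} \circ S) \setminus \Delta_X)$ has bounded vertex-degree and admits a proper finite coloring $c \colon X \to \{1, \dots, K\}$ via the greedy algorithm. Setting $X_\alpha = c^{-1}(\alpha)$, distinct $w, w' \in X_\alpha$ satisfy $S[w] \cap S[w'] = \emptyset$, so the multipliers $\{D_w\}_{w \in X_\alpha}$ have pairwise disjoint supports. Splitting $M_k = \sum_{\alpha=1}^K \Psi_\alpha$ with $\Psi_\alpha(a) := \sum_{w \in X_\alpha} D_w a D_w^*$, the operator $\Psi_\alpha(a)$ is block-diagonal across the disjoint windows $\{S[w]\}_{w \in X_\alpha}$, and with $N := \sup_w \sharp S[w]$ it factors as
\[
C^*_\mathrm{u}(X) \xrightarrow{\phi_\alpha} \ell^\infty(X_\alpha, M_N) \xrightarrow{\psi_\alpha} C^*_\mathrm{u}(X),
\]
where $\phi_\alpha(a)(w)$ is the compression of $a$ to $\ell^2(S[w])$ (embedded in $M_N$ via an enumeration) and $\psi_\alpha(g)$ re-embeds each $g(w) \in M_N$ back into $B(\ell^2(S[w])) \subset B(\ell^2(X))$ and then conjugates by $D_w$ before summing. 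Both maps are CP contractions; the contractivity of $\psi_\alpha$ crucially uses the disjoint-support property.

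Assembling over the $K$ colors, $M_k$ factors through the nuclear C*-algebra $A := \bigoplus_{\alpha=1}^K \ell^\infty(X_\alpha, M_N)$ with contractive $\Phi = (\phi_\alpha)_\alpha$ and $K$-bounded $\Psi((g_\alpha)_\alpha) := \sum_\alpha \psi_\alpha(g_\alpha)$. Using the CPAP of the nuclear $A$, I approximate $\mathrm{id}_A$ in point-norm by CP contractions through finite-dimensional C*-algebras; pre- and post-composing with $\Phi$ and $\Psi$ yields uniformly bounded CP maps through finite dimensions approximating $M_k$. Combined with the estimate of the first step and the density of $\bigcup_T E_T$ in $C^*_\mathrm{u}(X)$, this gives the CPAP, hence the nuclearity of $C^*_\mathrm{u}(X)$. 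The key difficulty is this nuclear-factorization step: a naive Stinespring picture would place $M_k$ through $B(\ell^2(X)) \otimes C^*_\mathrm{u}(X)$, which is not nuclear, and it is the coloring trick powered by uniform local finiteness that replaces this by a commutative-times-finite-dimensional intermediate.
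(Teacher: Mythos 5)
Your proof is correct, and its core is the same as the paper's: the Schur multiplier $a\mapsto[\langle\eta_y,\eta_x\rangle a_{x,y}]$ built from the unit vectors of Theorem \ref{TheoremHulanicki-type}(\ref{ConditionEll2}), together with the estimate from Lemma \ref{LemmaNorm} showing it approximates the identity on each $E_T$. The genuine divergence is in the step you call the main obstacle. The paper factors the multiplier through $C_S=\prod_{z\in X}\mathbb{B}(\ell^2(S[z]))$ (nuclear because the blocks have uniformly bounded size) by writing down the second map explicitly: $\Psi([[c^{(z)}_{x,y}]]_z)=\bigl[\sum_z\overline{\eta_x(z)}\,c^{(z)}_{x,y}\,\eta_y(z)\bigr]_{x,y}$, which is unital and completely positive in one stroke precisely because $\sum_z|\eta_x(z)|^2=1$ --- this is your $\Psi(g)=\sum_w D_w g(w) D_w^*$ run over \emph{all} $w\in X$ at once, with no need to verify contractivity block by block. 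Your bounded-degree coloring of $(S^{-1}\circ S)\setminus\Delta_X$ and the decomposition $M_k=\sum_\alpha\Psi_\alpha$ into disjointly supported pieces is a valid workaround (and your $\Psi$ is in fact unital, hence contractive, not merely $K$-bounded, since $\sum_\alpha\sum_{w\in X_\alpha}D_wD_w^*=I$), but it buys nothing here: the disjointness of the windows $S[w]$ is not needed for complete positivity or unitality of the reassembly map, only the partition-of-unity identity is. What your route does make more explicit is the final reduction to finite-dimensional algebras via the CPAP of the subhomogeneous intermediate algebra, which the paper compresses into the sentence ``Since $C_S$ is nuclear, the construction provides the proof.''
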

The proof is given by a verbatim translation of \cite[Proposition 11.41]{RoeLectureNote}.
For a controlled set $S \subset X^2$,
we denote by $C_S$ the C$^\ast$-algebra 
$\prod_{z \in X} \mathbb{B}(\ell^2(S[z]))$.
We also define a unital completely positive map
$\Phi_S : C^*_\mathrm{u}(X) \rightarrow C_S$ by
\[
\Phi_S(b) = [[b_{x, y}]_{x, y \in S[z]}]_{z \in X},
\]
where $b_{x, y}$ is the matrix coefficient $\langle b \delta_y, \delta_x \rangle$.

\begin{proof}
For any positive number $\epsilon$ and
any finite subset $\mathcal{F}$ of $C^*_\mathrm{u}(X)$, 
we find a controlled set $S$ and construct a unital completely positive map
$\Psi : C_S \rightarrow C^*_\mathrm{u}(X)$ such that
$\|\Psi \circ \Phi_S (b) - b\| < \epsilon$, for $b \in \mathcal{F}$.
Since $C_S$ is nuclear, the construction provides the proof.
By approximating elements of $\mathcal{F}$,
we may assume that for every $b \in \mathcal{F}$ the supports of the matrix coefficients $[b_{x, y} = \langle b \delta_y, \delta_x\rangle]_{x, y \in X}$ are included 
in a controlled set $T$.

Define $\delta> 0$ by 
$\displaystyle{\frac{\epsilon}
{\max_{b \in \mathcal{F}}\|b\| \cdot 
\max\{ \sup \sharp(T[x]), \sup \sharp(T^{-1}[x])\}}}$.
By Theorem \ref{TheoremHulanicki-type},
there exists a map $\eta \colon X \rightarrow \ell^2(X)$
assigning unit vectors such that
\begin{itemize}
\item
If $(x, y) \in T$, then $\|\eta_x - \eta_y\| < \delta$;
\item
$S = \{(x, z) \in X^2 ; z \in \mathrm{supp}(\eta_x)\}$ is controlled.
\end{itemize}
Define $\Psi : C_S \rightarrow C^*_\mathrm{u}(X)$ by
\[
\Psi([[c^{(z)}_{x, y}]_{x, y \in S[z]}]_{z \in X}) =
\left[
\sum_{z \in X} \overline{\eta_x(z)} c^{(z)}_{x, y} \eta_y(z)
\right]_{x, y \in X}.
\]
If $x \notin S[z]$ or if $y \notin S[z]$, we define $c^{(z)}_{x, y}$ by $0$.
It is routine to prove that $\Psi$ is unital and completely positive.
For $b \in \mathcal{F}$, the operator $\Psi \circ \Phi_S (b)$ is expressed as
\[
\left[
\sum_{z \in X} \overline{\eta_x(z)} b_{x, y} \eta_y(z)
\right]_{x, y \in X}
=
\left[
\langle \eta_y, \eta_x \rangle b_{x, y}
\right]_{x, y \in X}.
\]
Therefore the matrix coefficients of $\Psi \circ \Phi_S (b)$
are not zero only on the controlled set $T$.
For $(x, y) \in T$, 
$|1 - \langle \eta_y, \eta_x \rangle | 
= |\langle \eta_y, \eta_y - \eta_x \rangle| < \delta$.
It follows that 
\begin{eqnarray*}
\|\Psi \circ \Phi_S (b) - b\| 
&=& 
\|\left[
(1- \langle \eta_y, \eta_x \rangle) b_{x, y}
\right]_{x, y \in X}\| \\
&\le&
\max\{\sup\sharp(T[x]), \sup\sharp(T^{-1}[x])\}
\cdot \sup_{(x, y) \in T}
|(1- \langle \eta_y, \eta_x \rangle) b_{x, y}|\\
&<& \max\{ \sup \sharp(T[x]), \sup \sharp(T^{-1}[x])\} \cdot \delta \|b\|\\
&<& \epsilon.
\end{eqnarray*}
The second line is due to Lemma \ref{LemmaNorm}.
\end{proof}

The converse is also true.
In the paper
\cite{SakoNote},
the author proves the following stronger claim:
\begin{theorem}
Local reflexivity of  $C^*_\mathrm{u} (X)$
is equivalent to property A of $X$.
\end{theorem}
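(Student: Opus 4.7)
The easy direction, that property A implies local reflexivity of $C^*_\mathrm{u}(X)$, follows immediately from Theorem \ref{TheoremNuclear} together with the standard fact that every nuclear C$^*$-algebra is locally reflexive. So the substantive content of the theorem is the converse: local reflexivity of $C^*_\mathrm{u}(X)$ implies property A.

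For the converse, my plan is to adapt the argument used by Ozawa in the metric bounded-geometry setting (where exactness of the uniform Roe algebra is shown to imply property A), taking advantage of the fact that local reflexivity — which is a priori strictly weaker than nuclearity or exactness — is enough to drive the construction. Concretely, given $\epsilon > 0$ and a controlled set $T \in \mathcal{C}$, I would aim to produce a positive definite kernel $k$ satisfying condition (\ref{ConditionPositiveKernel}) of Theorem \ref{TheoremHulanicki-type}; property A will then follow from the equivalence already established. To build such a $k$, consider a finite-dimensional operator system $E \subset C^*_\mathrm{u}(X)^{**}$ generated inside the bidual by a well-chosen collection of partial-isometry-type elements implementing the combinatorics of $T$, together with a finite slice of $\ell^\infty(X)$. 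Local reflexivity supplies a completely positive map $\phi \colon E \to C^*_\mathrm{u}(X)$ approximating the inclusion in the point-weak$^*$ topology. Passing to a Stinespring dilation of $\phi$ and evaluating at the Dirac vectors $\delta_x$ produces a family of unit vectors $\{\eta_x\}_{x \in X}$ such that $k(x, y) = \langle \eta_x, \eta_y \rangle$ is approximately $1$ on $T$, giving the required positive definite kernel.

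The main obstacle is to arrange that the resulting kernel has controlled support, i.e.\ that the vectors $\eta_x$ have supports whose pairwise intersections are coarsely controlled. In the metric case this is handled by a propagation-truncation argument using finite propagation operators, but in a general coarse space the analogous step must be done via the filtration $C^*_\mathrm{u}(X) = \overline{\bigcup_{S \in \mathcal{C}} E_S}$ and uniform local finiteness: one replaces the approximating CP map $\phi$ by one whose image lies in $E_S$ for a suitable controlled $S$ without destroying the approximation. Lemma \ref{LemmaNorm} is the essential tool here, allowing one to control operator norms by matrix coefficients on a controlled set, exactly as in the proof of Theorem \ref{TheoremNuclear}. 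Once this controlled truncation has been carried out and the kernel shown to vanish off a controlled set, Theorem \ref{TheoremHulanicki-type} closes the deduction of property A.
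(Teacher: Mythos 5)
First, be aware that the paper does not actually prove this theorem here: it only records the statement and defers the proof to the companion paper \cite{SakoNote}. The direction you call easy is indeed the only part available from the present paper (Theorem \ref{TheoremNuclear} together with the standard implication that nuclear C$^*$-algebras are locally reflexive), and that part of your proposal is correct.

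Your sketch of the converse has a genuine gap. Local reflexivity provides point-weak$^*$ approximations of the inclusion $E \hookrightarrow A^{**}$ only for \emph{finite-dimensional} operator systems $E$, whereas the operator system you describe --- one containing matrix-unit-type elements $e_{x,y}$ for all $(x,y) \in T$ --- is infinite-dimensional whenever $X$ is infinite. You would have to work with finite-dimensional pieces and then recover a single family $\{\eta_x\}_{x \in X}$ satisfying the \emph{uniform} estimate $|1 - k(x,y)| < \epsilon$ on all of $T$; point-weak$^*$ convergence gives no uniform norm control, and your proposal does not explain how to obtain it. (This is precisely why the corresponding step in the proof of Theorem \ref{TheoremONLP} extracts the kernel from a completely bounded approximation \emph{in norm}, via Corollary B.9 of \cite{OzawaBook}, rather than from a pointwise one.) The controlled-support truncation that you flag as ``the main obstacle'' is likewise only named, not carried out: Lemma \ref{LemmaNorm} bounds the norm of an operator whose matrix coefficients live on a controlled set, but it does not by itself let you compress an arbitrary completely positive map into some $E_S$ while preserving the approximation on the $e_{x,y}$. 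For comparison, the argument in \cite{SakoNote} runs quite differently: it goes through the operator norm localization property (Theorem \ref{TheoremONLP}) and argues by contradiction, using a failure of ONL to manufacture --- via a direct-sum construction over a sequence of bad finite-propagation operators --- a finite-dimensional operator system in $C^*_\mathrm{u}(X)^{**}$ admitting no suitable completely positive approximation back into $C^*_\mathrm{u}(X)$, contradicting local reflexivity. As written, your direct construction does not close.
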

Local reflexivity is weaker than exactness and nuclearity for general C$^*$-algebras.
As a consequence,
all the following properties of $C^*_\mathrm{u} (X)$
are equivalent:
nuclearity, exactness, and local reflexivity.

\section
{Characterization by the operator norm localization}

Roughly speaking, Theorem \ref{TheoremHulanicki-type}
means that a space $X$ has property A if and only if its large scale structure can be described by a Hilbert space.
The coarse amenability of $X$ can be further rephrased in terms of operator norms.

\subsection{Definitions of the operator norm localization property}
Chen, Tessera, Wang and Yu defined 
the operator norm localization property
in \cite[section 2]{ONLPoriginal}. 
The original definition is given for metric spaces.
For a general uniformly locally finite coarse space $X$, we define the property as follows.
\begin{definition}
A uniformly locally finite coarse space $(X, \mathcal{C})$ is said to have the operator norm localization property (ONL) if 
for every $c < 1$ and $T \in \mathcal{C}$,
there exists a controlled set $S$ satisfying condition $(\beta)$:
for every operator $a \in E_T$,
there exists a unit vector $\eta \in \ell^2 (X)$ such that
$\mathrm{supp}(\eta)$ is an $S$-bounded set and
$c \| a \| \le \| a \eta \|$.
\end{definition}
This definition is analogous to condition $(iv)$ in \cite[Proposition 3.1]{SakoONLP}.
We may replace `for every $c < 1$' with
`there exists $c < 1$.'

\begin{lemma}\label{Lemma quantifier}
A uniformly locally finite coarse space $X$ has ONL, if and only if 
there exists $0 < c < 1$ such that for every controlled set $T$,
there exists a controlled set $S$ satisfying condition $(\beta)$.
\end{lemma}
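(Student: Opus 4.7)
The ``only if'' direction is built into the definition: ONL demands condition $(\beta)$ for every $c < 1$, so in particular for some $c_0 < 1$. For the converse, assume some fixed $c_0 \in (0,1)$ witnesses the weak form. Given an arbitrary target $c \in (0,1)$ and $T \in \mathcal{C}$, the plan is to produce a single controlled set $S$ so that $(\beta)$ holds for $(c, T)$ and every $a \in E_T$.

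The first move is to reduce to positive operators. For $a \in E_T$, set $b = a^*a \in E_{T^{-1}\circ T}$; then $b \geq 0$ with $\|b\| = \|a\|^2$, and the positive-operator inequality $b^2 \leq \|b\|\, b$ gives, for any unit vector $\eta$,
\[\|b\eta\|^2 = \langle b^2 \eta, \eta \rangle \leq \|b\|\langle b\eta,\eta\rangle = \|a\|^2\|a\eta\|^2.\]
Hence any estimate of the form $\|b\eta\|\geq c\|b\|$ automatically forces $\|a\eta\|\geq c\|a\|$, and it suffices to produce $S$ that serves every positive $b \in E_{T^{-1}\circ T}$.

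The heart of the argument is a sharpening-by-powers trick. Put $T' = T^{-1}\circ T$ and fix an integer $n$ with $c^n \leq c_0$ (for example $n = \lceil \log c_0/\log c \rceil$). Apply the weak hypothesis to the entourage $(T')^{\circ n}$ to obtain $S_n$. Because $b\geq 0$, one has $b^n \in E_{(T')^{\circ n}}$ with $\|b^n\| = \|b\|^n$, so $S_n$ supplies a unit vector $\xi$ with $S_n$-bounded support and $\|b^n\xi\|^2 \geq c_0^2 \|b\|^{2n}$. Set $\xi_k = b^k\xi/\|b^k\xi\|$ for $k = 0, \ldots, n-1$; each $\xi_k$ has support in the fixed controlled set $S := (T')^{\circ(n-1)}\circ S_n$, and a direct calculation gives $\|b\xi_k\|^2 = m_{k+1}/m_k$, where $m_k := \|b^k\xi\|^2$. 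The telescoping product $\prod_{k=0}^{n-1} m_{k+1}/m_k = m_n \geq c_0^2\|b\|^{2n}$; if every ratio were strictly smaller than $c^2\|b\|^2$, the product would be smaller than $c^{2n}\|b\|^{2n} \leq c_0^2\|b\|^{2n}$, a contradiction. So at least one index $k$ yields $\|b\xi_k\|\geq c\|b\|$, and via the reduction above $\|a\xi_k\|\geq c\|a\|$.

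The only delicate point is that $S = (T')^{\circ(n-1)}\circ S_n$ depends solely on $T$, $c$, and $c_0$: the integer $n$ and the entourage $S_n$ are chosen before $a$ is given, and the pigeonhole index $k\in\{0,\dots,n-1\}$, which may shift with $a$, is absorbed uniformly into this single controlled set. Everything else — the positive-operator inequality $b^2 \leq \|b\|b$, the telescoping product argument, and the identity $\|b^n\| = \|b\|^n$ for positive $b$ — is routine bookkeeping.
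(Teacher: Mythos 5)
Your argument is correct and is essentially the paper's proof: both rest on the same power-plus-pigeonhole trick, applying the weak hypothesis to a high power of the entourage, telescoping the norms $\|b^k\xi\|$, and extracting one good ratio; you merely package it through $b=a^*a$ and the inequality $b^2\le\|b\|\,b$, where the paper works directly with $(aa^*)^n$ and the vector $a^*(aa^*)^{j}\xi$. The only detail to patch is that the containment $\mathrm{supp}(b^k\xi)\subseteq\bigl((T')^{\circ(n-1)}\circ S_n\bigr)[x]$ for $k<n-1$ requires $T'$ to contain the diagonal, so either adjoin $\Delta_X$ to $T$ at the outset (as the paper does) or take $S=\bigcup_{k=0}^{n-1}(T')^{\circ k}\circ S_n$.
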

Following an idea in \cite[Proposition 2.4]{ONLPoriginal},
we give a proof.

\begin{proof}
Assume that $X$ satisfies the property in Lemma \ref{Lemma quantifier}
with respect to a constant $c < 1$.
Let $\kappa$ be an arbitrary real number less than $1$
and let $T$ be an arbitrary controlled set.
Replacing $T$ by $\Delta_X \cup T \cup T^{-1}$, we may assume that $T$ is symmetric and includes $\Delta_X$.
Choose a natural number $n$ satisfying $\kappa^n < c$. 

By the assumption on $X$,
there exists $S$ satisfying the following condition: for every $b \in E_{T^{\circ 2n}}$,
there exists a unit vector $\xi \in \ell^2 (X)$ such that
$\mathrm{supp}(\xi)$ is an $S$-bounded set and
$c \| b \| \le \| b \xi \|$.
Let $a \in E_T$ be an arbitrary operator of norm $1$.
Since the matrix coefficients of $(a a^*)^n$ are located on $T^{\circ 2n}$,
there exists a unit vector $\xi \in \ell^2 (X)$
such that 
$\mathrm{supp}(\xi)$ is $S$-bounded and that
$c \| (a a^*)^n \| \le \| (a a^*)^n \xi \|$.
Since the norm of $(a a^*)^n$ is $1$, we have
\begin{eqnarray*}
\kappa^n < c \le  
\frac{\| (a a^*)^{n} \xi \|}{\| (a a^*)^{n-1} \xi \|}
\cdots
\frac{\| (a a^*)^{2} \xi \|}{\| (a a^*) \xi \|}
\frac{\| (a a^*) \xi \|}{\| \xi \|}.
\end{eqnarray*}
It follows that there exists $j = 0, 1, \cdots, n - 1$ such that
$\kappa < \| (a a^*)^{j + 1} \xi \| / \| (a a^*)^{j} \xi \| $.
By letting 
$\eta = a^* (a a^*)^{j} \xi / \| a^* (a a^*)^{j} \xi \|$,
we have the inequality
\begin{eqnarray*}
\kappa \| a \| 
= \kappa 
< \| (a a^*)^{j + 1} \xi \| / \| (a a^*)^{j} \xi \| 
\le \| a a^* (a a^*)^{j} \xi \| / \| a^* (a a^*)^{j} \xi \| 
= \| a \eta \|.
\end{eqnarray*}
The support of $a^* (a a^*)^{j} \xi$ is 
$(T^{\circ 2n - 1} \circ S)$-bounded.
Thus we obtain condition $(\beta)$ for $\kappa$, $T$, and $T^{\circ 2n - 1} \circ S$.
\end{proof}

To characterize ONL, we use `$E_T$' and `$\Phi_S$'
in the proof of Theorem \ref{TheoremNuclear}.

\begin{lemma}\label{LemmaONLbyNorm}
A uniformly finite coarse space $X$ has ONL
if and only if the following condition holds:
For every $\epsilon > 0$ and a controlled set $T$,
there exists a controlled set $S$ satisfying
$\left\| (\Phi_S |_{E_T})^{-1} \colon \Phi_S(E_T) \rightarrow E_T \right\| < 1 + \epsilon$.
\end{lemma}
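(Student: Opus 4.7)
The plan is to interpret both sides of the equivalence through the same quantity, namely $\|\Phi_S(a)\| = \sup_{z \in X} \|P_z a P_z\|$, where $P_z$ denotes the projection of $\ell^2(X)$ onto $\ell^2(S[z])$. The ONL hypothesis produces a unit vector $\eta$ supported in an $S$-bounded set with $\|a\eta\|$ close to $\|a\|$, while the inverse-norm bound for $\Phi_S|_{E_T}$ says some block $P_z a P_z$ has norm close to $\|a\|$; uniform local finiteness, making each $\ell^2(S[z])$ finite-dimensional, is what lets one convert freely between these two forms.

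For the direction that ONL implies the norm condition, I would fix $\epsilon > 0$ and $T \in \mathcal{C}$, choose $c < 1$ with $1/c < 1 + \epsilon$, and invoke ONL for $(c, T)$ to obtain a controlled set $S_0$. The critical step, and the place where some care is needed, is to enlarge $S_0$ to $S = (T \cup \Delta_X) \circ S_0$. Given $a \in E_T$, ONL supplies a unit vector $\eta$ with $\mathrm{supp}(\eta) \subset S_0[z_0]$ and $c\|a\| \le \|a\eta\|$; since $a \in E_T$, the support of $a\eta$ then lies in $T[S_0[z_0]] \subset S[z_0]$. Thus both $\eta$ and $a\eta$ live in $\ell^2(S[z_0])$, which gives $a\eta = P_{z_0} a P_{z_0}\eta$ and hence $c\|a\| \le \|P_{z_0} a P_{z_0}\| \le \|\Phi_S(a)\|$, yielding $\|(\Phi_S|_{E_T})^{-1}\| \le 1/c < 1+\epsilon$.

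For the converse, I would apply Lemma \ref{Lemma quantifier}, so it suffices to produce a single $c < 1$ witnessing ONL for every $T$. Fix such a $c$ and a further $\epsilon$ with $c(1+\epsilon) < 1$, and let $S$ be given by the norm hypothesis; then for every nonzero $a \in E_T$ one has $\|\Phi_S(a)\| > c\|a\|$, so some $z_0$ achieves $\|P_{z_0} a P_{z_0}\| > c\|a\|$. Uniform local finiteness makes $\ell^2(S[z_0])$ finite-dimensional, so a unit vector $\eta \in \ell^2(S[z_0])$ attaining this block norm exists, and it is $S$-bounded with $\|a\eta\| \ge \|P_{z_0} a P_{z_0}\eta\| > c\|a\|$. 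The only real obstacle is the forward direction's arrangement that $\eta$ and $a\eta$ lie in the same block $\ell^2(S[z_0])$, handled by the enlargement $S_0 \mapsto (T \cup \Delta_X) \circ S_0$; the converse then amounts to reading the definitions together with the finite-dimensionality of each block.
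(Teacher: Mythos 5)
Your proposal is correct and follows essentially the same route as the paper: in the forward direction both arguments enlarge the controlled set by composing with $T$ so that $\eta$ and $a\eta$ sit in a common block of $\Phi_S$, and in the converse both use uniform local finiteness (finite-dimensionality of $\ell^2(S[x])$) to pick a norm-attaining unit vector in the best block. Your replacement of $T\circ S_0$ by $(T\cup\Delta_X)\circ S_0$ is a slightly more careful version of the paper's step guaranteeing that $\mathrm{supp}(\eta)$ itself lies in the enlarged block, and your appeal to Lemma \ref{Lemma quantifier} in the converse is harmless though not needed, since the hypothesis already supplies every $c<1$.
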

\begin{proof}
Assume that $X$ has ONL.
For arbitrary $\epsilon > 0$ and a controlled set $T$, 
there exists $S$ which satisfies the condition $(\beta)$ 
for $c = (1 + \epsilon)^{-1}$ and $T$.

It follows that for every non-zero operator $a \in E_T$, 
there exists a unit vector $\eta \in \ell^2(X)$ 
whose support is $S$-bounded and satisfies
$\| a \| \le (1 + \epsilon) \| a \eta \|$.
Since $a \in E_T$,
$\mathrm{supp}(a \eta)$ is included in
the subset $T[\mathrm{supp}(\eta)]$.
Hence there exists a unit vector $\xi$ such that 
$\| a \eta \| = \langle a \eta, \xi \rangle$ and that 
the supports of $\xi$, $\eta$ are included in a common
$(T \circ S)$-bounded set.
There exists $x \in X$ satisfying
\begin{eqnarray*}
\| a \| \le (1 + \epsilon) \langle a \eta, \xi \rangle
\le (1 + \epsilon) \| [a_{y,z}]_{y, z \in T \circ S[x]} \| \le (1 + \epsilon) \| \Phi_{T \circ S} (a) \|,
\end{eqnarray*}
we get $\left\| (\Phi_{T \circ S} |_{E_T})^{-1} \right\| \le 1 + \epsilon$.

Conversely,
suppose that for every $c < 1$ and a controlled set $T$,
there exists a controlled set $S$ satisfying
$\left\| (\Phi_S |_{E_T})^{-1} \colon \Phi_S(E_T) \rightarrow E_T \right\| < 1/c$.
Then for every operator $a \in E_T$, 
there exists an $S$-bounded set $S[x]$ satisfying
\begin{eqnarray*}
c \| a \| \le \| [a_{y, z}]_{y, z \in S[x]} \|.
\end{eqnarray*}
Take a unit vector $\eta \in \ell^2(S[x])$ such that
$\| [a_{y, z}]_{y, z \in S[x]} \| = \| [a_{y, z}]_{y, z \in S[x]} \eta \|$.
The vector $\eta$ satisfies
$c \| a \| \le \| a \eta \|$.
It follows that condition $(\beta)$
holds true for $c <1$, $T$, and $S$.
We conclude that $X$ has ONL.
\end{proof}

Let us recall the notions of completely positive map and completely bounded map.
\begin{definition}
A  closed subspace $E$ of a unital C$^\ast$-algebra is
called an operator system, if
\begin{itemize}
\item
For every element $a$ of $E$, $a^*$ is also an element of $E$;
\item
The unit of the ambient C$^*$-algebra is an element of $E$.
\end{itemize}
A linear map $\Phi$ from $E$ to a C$^\ast$-algebra $C$
is said to be completely positive, if the map
$\Phi^{(n)} = \Phi \otimes \mathrm{id} 
\colon E \otimes \mathbb{M}_n(\mathbb{C}) \rightarrow C \otimes \mathbb{M}_n(\mathbb{C})$
is positive for every $n$.
\end{definition}

For controlled set $T \subset S$
The subspaces $E_T \subseteq C^*_u(X)$ 
and $\Phi_S(E_T) \subseteq D_S$ are examples of operator systems.
The map $\Phi_S \colon E_T \rightarrow C_S$ 
is completely positive.

\begin{definition}
Let $B$ be a C$^*$-algebra and let $F$ be an operator system.
A linear map $\Psi \colon F \rightarrow B$ is said to be completely bounded
if the increasing sequence $\{\|\Psi^{(n)} \colon 
F \otimes \mathbb{M}_n(\mathbb{C}) 
\rightarrow B \otimes \mathbb{M}_n(\mathbb{C}) \|\}$ is bounded.
The number $\| \Psi \|_{\rm cb} = \mathrm{sup}_{n \in \mathbb{N}} \| \Psi^{(n)}\|$ 
is called the completely bounded norm of $\theta$.
\end{definition}
The norms $\| \Psi \|$ 
and $\| \Psi \|_{\rm CB}$ are not identical in general, but
we have the following.

\begin{lemma}\label{LemmaONLbyCBNorm}
$\left\| (\Phi_S |_{E_T})^{-1} \colon \Phi_S(E_T) \rightarrow E_T \right\|_\mathrm{CB}
= \left\| (\Phi_S |_{E_T})^{-1} \right\|$.
\end{lemma}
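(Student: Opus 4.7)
The inequality $\|\Psi\| \le \|\Psi\|_{\mathrm{CB}}$ is automatic by taking $n=1$, where $\Psi := (\Phi_S|_{E_T})^{-1}$. My plan is to prove the reverse direction by reducing any matrix-amplified compression to the scalar one via an isometric factorization trick. Fix $n$ and $a = [a_{ij}]_{i,j=1}^{n} \in \mathbb{M}_n(E_T) = E_T \otimes \mathbb{M}_n$, acting on $\ell^2(X) \otimes \mathbb{C}^n$; the goal is
\[
\|a\| \;\le\; \|\Psi\| \cdot \|\Phi_S^{(n)}(a)\|.
\]

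Given $\epsilon>0$, I would choose unit vectors $\xi,\eta \in \ell^2(X)\otimes\mathbb{C}^n$ with $|\langle a\xi,\eta\rangle| > \|a\| - \epsilon$ and decompose $\xi = \sum_x \delta_x \otimes v_x$, $\eta = \sum_x \delta_x \otimes w_x$. Write $v_x = \alpha_x \hat v_x$ and $w_x = \beta_x \hat w_x$ with $\hat v_x, \hat w_x$ unit in $\mathbb{C}^n$ and $\alpha_x = \|v_x\|$, $\beta_x = \|w_x\|$. Define isometries $V,W\colon \ell^2(X) \to \ell^2(X)\otimes\mathbb{C}^n$ by $V\delta_x = \delta_x \otimes \hat v_x$ and $W\delta_x = \delta_x \otimes \hat w_x$, and set $b := W^* a V \in \mathbb{B}(\ell^2(X))$. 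The key observation is that $b$ lies in $E_T$: the matrix coefficient $b_{x,y} = \langle a_{x,y}\hat v_y, \hat w_x\rangle_{\mathbb{C}^n}$ involves the block $a_{x,y} \in \mathbb{M}_n$ whose entries $(a_{ij})_{x,y}$ vanish when $(x,y) \notin T$. Then the scalar hypothesis yields $\|b\| \le \|\Psi\| \cdot \|\Phi_S(b)\|$. Setting $\alpha = \sum_x \alpha_x \delta_x$, $\beta = \sum_x \beta_x \delta_x$ (unit vectors in $\ell^2(X)$), the identities $V\alpha = \xi$ and $W\beta = \eta$ give $\langle b\alpha, \beta\rangle = \langle a\xi,\eta\rangle$, so $\|b\| > \|a\| - \epsilon$.

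For the other side of the scalar inequality, I would use that $V$ and $W$ preserve the $X$-coordinate, which implies $VP_{S[z]} = (P_{S[z]}\otimes I_n) V P_{S[z]}$ and the analogous identity for $P_{S[z]} W^*$. Consequently
\[
P_{S[z]} b P_{S[z]} \;=\; P_{S[z]} W^* (P_{S[z]}\otimes I_n)\, a\, (P_{S[z]}\otimes I_n) V P_{S[z]},
\]
and contractivity of $V, W$ gives $\|P_{S[z]} b P_{S[z]}\| \le \|(P_{S[z]}\otimes I_n) a (P_{S[z]}\otimes I_n)\|$. Taking the supremum in $z$ yields $\|\Phi_S(b)\| \le \|\Phi_S^{(n)}(a)\|$; combining with the earlier estimate and letting $\epsilon \to 0$ finishes the proof. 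The main obstacle here is isolating this factorization — splitting the $\mathbb{M}_n$-valued test vectors into an $\mathbb{M}_n$-piece (absorbed by $V, W$) and a scalar piece (the vectors $\alpha, \beta$) so that the auxiliary operator $b$ lands in $E_T$ and is amenable to the hypothesis. Once that is in place, verifying the factorization compatibility with the compressions is routine.
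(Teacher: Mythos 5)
Your proposal is correct and follows essentially the same route as the paper: both arguments pick near-norming vectors $\xi,\eta$ for the amplified operator, absorb their fibrewise directions into coordinate-preserving isometries $V,W$ with $V\delta_x, W\delta_x\in\mathbb{C}\delta_x\otimes\mathbb{C}^n$, and compress to $b=W^*aV\in E_T$ so that the scalar bound applies, using $\|\Phi_S(W^*aV)\|\le\|\Phi_S^{(n)}(a)\|$. Your write-up is in fact slightly more explicit than the paper's on the last compression estimate, which the paper only asserts.
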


\begin{proof}
For a natural number $n$, we denote by 
$((\Phi_S |_{E_T})^{-1})^{(n)}$ the linear map
\[
(\Phi_S |_{E_T})^{-1} \otimes \mathrm{id}: 
\Phi_S(E_T) \otimes \mathbb{M}(n, \mathbb{C})
 \rightarrow 
E_T \otimes \mathbb{M}(n, \mathbb{C}).
\]
It suffices to show that  
\begin{eqnarray*}
\| ((\Phi_S |_{E_T})^{-1})^{(n)} \| \le \| (\Phi_S |_{E_T})^{-1} \| \quad
\textrm{for every}\ n \in \mathbb{N}.
\end{eqnarray*}
Take an arbitrary positive number $K$ satisfying 
$K < \| ((\Phi_S |_{E_T})^{-1})^{(n)} \|$.
There exists an operator $a \in E_T \otimes \mathbb{M}(n, \mathbb{C})$
satisfying 
\[K < \| a \|, \quad \| \Phi_S \otimes \mathrm{id}_{\mathbb{M}(n, \mathbb{C})} (a) \| = 1.\]
We claim that there exist isometries
$V, W \colon \ell^2(X) \rightarrow \ell^2(X) \otimes \mathbb{C}^n$ satisfying
\begin{eqnarray*}
V \delta_x, W \delta_x \in \mathbb{C}\delta_x \otimes \mathbb{C}^n \quad 
\textrm{and} \quad
K < \| W^* a V \| \le \| a \|.
\end{eqnarray*}
Indeed, there exist unit vectors $\xi = \sum \delta_x \otimes \xi_x$ and $\eta = \sum \delta_y \otimes \eta_x$ such that
\[K < |\langle a \xi, \eta \rangle| < \|a\|.\]
We define isometries $V, W$ by
\begin{eqnarray*}
V(\delta_x) = 
\left\{
\begin{array}{ll}
\delta_x \otimes \xi_x / \| \xi_x \|, & \textrm{ if } \xi_x \neq 0,\\
\delta_x \otimes \delta_1, & \textrm{ if } \xi_x = 0,\\
\end{array}
\right.
\quad
W(\delta_x) = 
\left\{
\begin{array}{ll}
\delta_x \otimes \eta_x / \| \eta_x \|, & \textrm{ if } \eta_x \neq 0,\\
\delta_x \otimes \delta_1, & \textrm{ if } \eta_x = 0.\\
\end{array}
\right.
\end{eqnarray*} 
Then we have 
$K < |\langle a \xi, \eta \rangle| \le \|W^* a V\| \le \|a\|$.

Observe that the matrix coefficients 
of $W^* a V$ are zero on $X^2 \setminus T$ and
that 
\[
\| \Phi_S(W^* a V) \| \le \| \Phi_S^{(n)}(a) \| = 1.\]
It follows that $K < \| W^* a V \| / \| \Phi_S(W^* a V) \| \le \| (\Phi_S |_{E_T} )^{-1} \|$.
We obtain the inequality
$\| ((\Phi_S |_{E_T})^{-1})^{(n)} \| \le \| (\Phi_S |_{E_T})^{-1} \|$.
\end{proof}

\subsection{Property A and ONL}
The author proved in \cite[Theorem 4.1]{SakoONLP} that the operator norm localization property is equivalent to property A for a metric space with bounded geometry.  More generally, this theorem is also valid for uniformly locally finite coarse spaces. 
\begin{theorem}\label{TheoremONLP}
A uniformly locally finite coarse space $X$ has property A,
if and only if $X$ has the operator norm localization property.
\end{theorem}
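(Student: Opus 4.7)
The plan is to prove the two implications separately using the machinery developed earlier. For property A $\Rightarrow$ ONL, I will rely on the UCP construction from the proof of Theorem \ref{TheoremNuclear} combined with the norm reformulation in Lemma \ref{LemmaONLbyNorm}. For ONL $\Rightarrow$ property A, I will upgrade to the CB-norm version via Lemma \ref{LemmaONLbyCBNorm}, extend using Arveson--Wittstock, represent via Stinespring--Paulsen, and then extract vectors satisfying Theorem \ref{TheoremHulanicki-type}(\ref{ConditionHilbert}).

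For the easier direction, fix $\epsilon > 0$ and $T \in \mathcal{C}$, set $N_T = \max\{\sup_x\sharp(T[x]), \sup_x \sharp(T^{-1}[x])\}$, and pick $\delta > 0$ with $N_T\delta$ small. Theorem \ref{TheoremHulanicki-type}(\ref{ConditionEll2}) supplies unit vectors $\eta_x$ with $\|\eta_x-\eta_y\|<\delta$ on $T$ and with $S = \{(x,y) : y \in \mathrm{supp}(\eta_x)\}$ controlled. The UCP map $\Psi$ from the proof of Theorem \ref{TheoremNuclear} satisfies $(\Psi\circ\Phi_S)(b) = [\langle\eta_y,\eta_x\rangle b_{x,y}]$, whose matrix coefficients differ from those of $b$ only on $T$ and by at most $\delta\|b\|$ in absolute value. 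Lemma \ref{LemmaNorm} then yields $\|(\Psi\circ\Phi_S)(b) - b\| \le N_T\delta\|b\|$, so $\|b\| \le \|\Phi_S(b)\| + N_T\delta\|b\|$. Choosing $\delta$ small gives $\|(\Phi_S|_{E_T})^{-1}\| < 1+\epsilon$, and Lemma \ref{LemmaONLbyNorm} concludes ONL.

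For the harder direction, fix $\epsilon > 0$ and a symmetric controlled $T$ containing $\Delta_X$. By Lemmas \ref{LemmaONLbyNorm} and \ref{LemmaONLbyCBNorm}, there is a controlled $S$ with $\|(\Phi_S|_{E_T})^{-1}\|_{\mathrm{CB}} < 1+\epsilon$. Since $\mathbb{B}(\ell^2(X))$ is injective, Arveson--Wittstock extends this CB inverse to a CB map $\tilde\rho \colon C_S \to \mathbb{B}(\ell^2(X))$ of the same CB-norm. The Wittstock--Paulsen representation theorem then provides a Hilbert space $\mathcal{K}$, a $*$-representation $\pi \colon C_S \to \mathbb{B}(\mathcal{K})$, and operators $V, W \colon \ell^2(X) \to \mathcal{K}$ with $\|V\|\|W\| < 1+\epsilon$ such that $a = V^*\pi(\Phi_S(a))W$ for every $a \in E_T$. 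Introduce the local diagonal projections $q_x \in C_S$ whose $z$-th component is $|\delta_x\rangle\langle\delta_x|$ if $x \in S[z]$ and zero otherwise; the factorization $\Phi_S(e_{x,y}) = q_x \Phi_S(e_{x,y}) q_y$ for $(x,y) \in T$ encodes the locality of $T$ at the level of $C_S$. Using Paulsen's $2\times 2$ symmetrization trick on $V$ and $W$ and following the strategy of \cite{SakoONLP}, I extract from this Stinespring picture a map $\eta \colon X \to \mathcal{H}$ of approximate unit vectors with $\|\eta_x - \eta_y\|$ small on $T$ and with controlled inner-product support, which by Theorem \ref{TheoremHulanicki-type}(\ref{ConditionHilbert}) delivers property A.

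The main obstacle lies in the controlled support of $\langle\eta_x,\eta_y\rangle$. Because $q_x q_y = 0$ in $C_S$ for $x \ne y$, the projections $\pi(q_x)$ are pairwise orthogonal, so a naive choice such as $\eta_x = \pi(q_x) W\delta_x$ would annihilate all off-diagonal inner products. The vectors must instead record the off-diagonal action of $\pi(\Phi_S(E_T))$, for instance by combining $V\delta_x$ and $W\delta_x$ via the Paulsen dilation together with an interpolating element of $C_S$ whose matrix coefficients are locally supported; the desired control on the support then comes from pairing the locality of the $q_x$ with the central block decomposition of $C_S$ afforded by the projections $\pi(p_z)$.
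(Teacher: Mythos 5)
Your first direction (property A $\Rightarrow$ ONL) is correct and is essentially the paper's own argument: the same $\Phi_S$, $\Psi$ from the proof of Theorem \ref{TheoremNuclear}, the same application of Lemma \ref{LemmaNorm} to get $\|\Psi\circ\Phi_S(b)-b\|\le N_T\delta\|b\|$, and contractivity of $\Psi$ to bound $\|(\Phi_S|_{E_T})^{-1}\|$. Whether one then invokes Lemma \ref{LemmaONLbyNorm} or extracts the localizing unit vector directly, as the paper does, is immaterial.

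The second direction has a genuine gap, and you have in fact located it yourself: everything after ``Wittstock--Paulsen representation theorem'' is a description of what a proof would need to do, not a proof. The factorization $a=V^*\pi(\Phi_S(a))W$ and the projections $q_x$ are correctly set up, but the actual construction of vectors $\eta_x$ with $\|\eta_x-\eta_y\|$ small on $T$ \emph{and} with controlled support of $\langle\eta_x,\eta_y\rangle$ is never carried out; your closing paragraph explicitly records the obstruction (orthogonality of the $\pi(q_x)$ kills all off-diagonal inner products for the naive choice) and then only gestures at ``an interpolating element'' without producing one. As it stands, condition (\ref{ConditionHilbert}) of Theorem \ref{TheoremHulanicki-type} is not verified. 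The missing idea, which is how the paper closes this direction, is to stay at the level of maps rather than vectors: since $(\Phi_S|_{E_T})^{-1}$ is unital and self-adjoint with CB norm $<1+\epsilon/2$, Corollary B.9 of \cite{OzawaBook} (itself proved by the very Wittstock--Paulsen machinery you invoke) gives a genuinely \emph{unital completely positive} $\Psi$ with $\|(\Phi_S|_{E_T})^{-1}-\Psi|_{\Phi_S(E_T)}\|_{\rm CB}<\epsilon$. One then defines $k(x,y)=\langle\Psi\circ\Phi_S(e_{x,y})\delta_y,\delta_x\rangle$; complete positivity of $\Psi\circ\Phi_S$ makes $k$ positive definite, $\Phi_S(e_{x,y})=0$ off $S$ controls the support, and the CB approximation gives $|1-k(x,y)|<\epsilon$ on $T$. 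This verifies condition (\ref{ConditionPositiveKernel}) of Theorem \ref{TheoremHulanicki-type} directly and bypasses the vector construction entirely. Without this step (or a concrete completion of your dilation argument), the proof is incomplete.
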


\begin{proof}
We first assume that $X$ has property A.
Take an arbitrary controlled set $T \subset X^2$ and $\epsilon > 0$.
Define $\delta> 0$ by 
$\displaystyle{\frac{\epsilon}{\max\{ \sup \sharp(T[x]), \sup \sharp(T^{-1}[x])\}}}$.
By Theorem \ref{TheoremHulanicki-type}, there exist unit vectors 
$\{\eta_x\}_{x \in X} \subseteq \ell^2(X)$ satisfying
the following:
\begin{itemize}
\item
if $(x, y) \in T$, then $\|\eta_x - \eta_y\| < \epsilon$,
\item
$S = \{(x, z) \in X^2 ; z \in \mathrm{supp}(\eta_x)\}$ is controlled.
\end{itemize}
Recall that the C$^\ast$-algebra $C_S$ is defined by $\prod_{z \in X} \mathbb{B}(\ell^2(S[z]))$. We also use the unital completely positive maps $\Phi_S : C^*_\mathrm{u}(X) \rightarrow C_S$ and $\Psi : C_S \rightarrow C^*_\mathrm{u}(X)$ in the proof of Theorem \ref{TheoremNuclear}.
For an operator $b \in E_T$, and the matrix coefficient of $\Psi \circ \Phi (b)$ at $(x, y) \in T$ is $\langle \eta_y, \eta_x \rangle b_{x, y}$.
By Lemma \ref{LemmaNorm}, the following inequality follows:
\begin{eqnarray*}
\|\Psi \circ \Phi_S (b) - b\| 
&\le& 
\max\{\sup\sharp(T[x]), \sup\sharp(T^{-1}[x])\} \cdot
\sup_{(x, y) \in T} | \langle \eta_y, \eta_x \rangle b_{x, y} - b_{x, y} |\\ 
&\le& \epsilon \|b\|.
\end{eqnarray*}
Since $\Psi$ is contractive, we have
$\| \Phi_S (b) \| \ge \|\Psi \circ \Phi_S (b) \| \ge
(1- \epsilon) \|b\|$.
Then there exists $x \in X$ such that
$\| b |_{\ell^2(S[x])} \| \ge (1 - 2 \epsilon) \|b\|$.
It follows that there exists a unit vector $\xi \in \ell^2(S[x])$
such that $\| b \xi \| \ge (1 - 2 \epsilon) \|b\|$.
We conclude that $X$ has ONL.

Now assume that $X$ has ONL.
By Lemma \ref{LemmaONLbyNorm} and Lemma \ref{LemmaONLbyCBNorm}, 
for any controlled set $T$ and $\epsilon >0$, there exists a controlled set $S$ such that
\[
\| (\Phi_S |_{E_T})^{-1} \colon \Phi_S(E_T) \rightarrow E_T \|_{\rm CB} < 1 + \epsilon/2.
\]
It is easy to check that $(\Phi_S |_{E_T})^{-1}$ is unital and self-adjoint. 
By Corollary B.9 of the book \cite{OzawaBook}, 
there exists a unital completely positive map
$\Psi \colon B_S \rightarrow \mathbb{B}(\ell^2(X))$ which satisfies
$\| (\Phi_S |_{E_R})^{-1} - \Psi |_{\Phi_S (E_R)} \|_{\rm CB} < \epsilon$.

Define a function $k$ on the set $X^2$ by 
$k(x,y) = \langle \Psi \circ \Phi_S (e_{x,y}) \delta_y, \delta_x \rangle$,
where $e_{x, y}$ is the rank $1$ partial isometry which maps
$\delta_y$ to $\delta_x$.
Since $\Psi \circ \Phi_S$ is completely positive,
for every $x(1), x(2), \cdots, x(n) \in X$ and 
$\lambda_1, \lambda_2, \cdots, \lambda_n \in \mathbb{C}$,
we have
\begin{eqnarray*}
0
&\le&
\left\langle 
(\Psi \circ \Phi_S)^{(n)}
\left(
\left[
\begin{array}{cccc}
e_{x(1), x(1)} & \cdots & e_{x(1), x(n)} \\
\vdots & \ddots & \vdots \\
e_{x(n), x(1)} & \cdots & e_{x(n), x(n)} 
\end{array}
\right]
\right) 
\left[
\begin{array}{cccc}
\lambda_1 \delta_{x(1)}\\
\vdots\\
\lambda_n \delta_{x(n)}
\end{array}
\right] 
,
\left[
\begin{array}{cccc}
\lambda_1 \delta_{x(1)}\\
\vdots\\
\lambda_n \delta_{x(n)}
\end{array}
\right] 
\right\rangle\\
&=&
\sum_{i, j =1}^{n} \overline{\lambda_i} \lambda_j k(x(i), x(j)).
\end{eqnarray*}
It follows that $k$ is a positive definite kernel on $X$.
The support of $k$ is included in the controlled set $S$, because
$\Phi_S(e_{x, y}) = 0$ if $(x, y) \notin S$.
For $(x, y) \in T$, we have 
\begin{eqnarray*}
\left| 1 - k(x, y) \right| 
= \left| \langle (e_{x, y} - \Psi \circ \Phi_S (e_{x, y})) \delta_y, \delta_x \rangle \right| 
\le \left\| ((\Phi_S |_{E_R})^{-1} - \Psi)(\Phi_S (e_{x, y})) \right\| < \epsilon.
\end{eqnarray*}
It follows that $X$ satisfies condition (\ref{ConditionPositiveKernel}) in Theorem \ref{TheoremHulanicki-type}.
\end{proof}

\bibliographystyle{amsalpha}
\bibliography{remark.bib}

\providecommand{\bysame}{\leavevmode\hbox to3em{\hrulefill}\thinspace}
\providecommand{\MR}{\relax\ifhmode\unskip\space\fi MR }
\providecommand{\MRhref}[2]{%
  \href{http://www.ams.org/mathscinet-getitem?mr=#1}{#2}
}
\providecommand{\href}[2]{#2}
\begin{thebibliography}{CTWY08}

\bibitem[BdlHV08]{BekkaDelaHarpeValette}
B.~Bekka, P.~de~la Harpe, and A.~Valette, \emph{Kazhdan's property ({T})}, New
  Mathematical Monographs, vol.~11, Cambridge University Press, Cambridge,
  2008.

\bibitem[BNW07]{PaperBrodzkiNibloWright}
J.~Brodzki, G.~A. Niblo, and N.~J. Wright, \emph{Property {A}, partial
  translation structures, and uniform embeddings in groups}, J. Lond. Math.
  Soc. (2) \textbf{76} (2007), no.~2, 479--497.

\bibitem[BO08]{OzawaBook}
N.~P. Brown and N.~Ozawa, \emph{{$C^*$}-algebras and finite-dimensional
  approximations}, Graduate Studies in Mathematics, vol.~88, American
  Mathematical Society, Providence, RI, 2008.

\bibitem[CTWY08]{ONLPoriginal}
X.~Chen, R.~Tessera, X.~Wang, and G.~Yu, \emph{Metric sparsification and
  operator norm localization}, Adv. Math. \textbf{218} (2008), no.~5,
  1496--1511.

\bibitem[HR00]{HigsonRoe}
N.~Higson and J.~Roe, \emph{Amenable group actions and the {N}ovikov
  conjecture}, J. Reine Angew. Math. \textbf{519} (2000), 143--153.

\bibitem[Roe03]{RoeLectureNote}
J.~Roe, \emph{Lectures on coarse geometry}, University Lecture Series, vol.~31,
  American Mathematical Society, Providence, RI, 2003.

\bibitem[Sak12a]{SakoONLP}
H.~Sako, \emph{Property {A} and the operator norm localization property for
  discrete metric spaces}, to appear in J. Reine Angew. Math.

\bibitem[Sak12b]{SakoNote}
\bysame, \emph{Translation {C}$^\ast$-algebras and property {A} for uniformly
  locally finite spaces}, preprint, 2012.

\bibitem[STY02]{SkandalisTuYu}
G.~Skandalis, J.-L. Tu, and G.~Yu, \emph{The coarse {B}aum-{C}onnes conjecture
  and groupoids}, Topology \textbf{41} (2002), no.~4, 807--834.

\bibitem[Yu00]{Yu:CoarseHilbert}
G.~Yu, \emph{The coarse {B}aum-{C}onnes conjecture for spaces which admit a
  uniform embedding into {H}ilbert space}, Invent. Math. \textbf{139} (2000),
  no.~1, 201--240.

\end{thebibliography}

\end{document}